\newtheorem{theorem}{Theorem}[section]
\theoremstyle{plain}
\newtheorem{conjecture}[theorem]{Conjecture}
\newtheorem{lemma}[theorem]{Lemma}
\theoremstyle{remark}
\numberwithin{equation}{section}
\newcommand{\Jcal}{\mathscr{J}}
\newcommand{\Ncal}{\mathscr{N}}
\newcommand{\Pro}{\mathbb{P}}
\newcommand{\Z}{\mathbb{Z}}
\newcommand{\C}{\mathbb{C}}
\newcommand{\F}{\mathbb{F}}
\newcommand{\Q}{\mathbb{Q}}
\newcommand{\rk}{\mathrm{rk}\,}
\newcommand{\MW}{\mathrm{MW}}
  \DeclareFontFamily{U}{wncy}{}
    \DeclareFontShape{U}{wncy}{m}{n}{<->wncyr10}{}
    \DeclareSymbolFont{mcy}{U}{wncy}{m}{n}
    \DeclareMathSymbol{\Sha}{\mathord}{mcy}{"58}
\begin{document}
\title[]{On the fibres of an elliptic surface where the rank does not jump}

\author{Jerson Caro}
\address{ Departamento de Matem\'aticas,
Pontificia Universidad Cat\'olica de Chile.
Facultad de Matem\'aticas,
4860 Av.\ Vicu\~na Mackenna,
Macul, RM, Chile}
\email[J. Caro]{jocaro@uc.cl}%

\author{Hector Pasten}
\address{ Departamento de Matem\'aticas,
Pontificia Universidad Cat\'olica de Chile.
Facultad de Matem\'aticas,
4860 Av.\ Vicu\~na Mackenna,
Macul, RM, Chile}
\email[H. Pasten]{hector.pasten@mat.uc.cl}%

\thanks{J.C. was supported by ANID Doctorado Nacional 21190304. H.P. was supported by ANID (ex CONICYT) FONDECYT Regular grant 1190442 from Chile. }

\date{\today}
\subjclass[2010]{Primary: 14J27; Secondary: 11G05, 11G40} %
\keywords{Elliptic surface, rank, parity conjecture}%

\begin{abstract} For a non-constant elliptic surface over $\mathbb{P}^1$ defined over $\mathbb{Q}$,  it is a result of Silverman that the Mordell--Weil rank of the fibres is at least the rank of the group of sections, up to finitely many fibres. If the elliptic surface is non-isotrivial one expects that this bound is an equality for infinitely many fibres, although no example is known unconditionally. Under the Bunyakovsky conjecture, such an example has been constructed by Neumann and Setzer. In this note we show that the Legendre elliptic surface has the desired property, conditional on the existence of infinitely many Mersenne primes.
\end{abstract}

\maketitle



\setcounter{secnumdepth}{2}


\section{Introduction}

Let $\pi:X\to \Pro^1$ be a non-constant elliptic surface (with section) defined over $\Q$ and let $\MW(X,\pi)$ be its group of sections defined over $\Q$. This group is finitely generated by the Lang-N\'eron theorem \cite{LN}. By a theorem of Silverman \cite{SilvermanSp}, for all but finitely many $b\in \Pro^1(\Q)$ the fibre $X_b$ is an elliptic curve with $\rk X_b(\Q)\ge \rk \MW(X,\pi)$ and one can ask when is this inequality strict and when is it an equality. Thus, let us define
$$
\Jcal(X,\pi)=\{b\in \Pro^1(\Q) : X_b\mbox{ is an elliptic curve with }\rk X_b(\Q)>\rk \MW(X,\pi)\}
$$
and
$$
\Ncal(X,\pi)=\{b\in \Pro^1(\Q) : X_b\mbox{ is an elliptic curve with }\rk X_b(\Q)=\rk \MW(X,\pi)\}.
$$
These are the sets of points $b\in \Pro^1(\Q)$ where the rank jumps and where it does not jump respectively. The question is whether these sets are infinite.

There is a considerable body of work addressing this problem for $\Jcal(X,\pi)$ and we refer the reader to \cite{Salgado} and the references therein. 

On the other hand, much less is known for $\Ncal(X,\pi)$. In \cite{CasselsSchinzel} Cassels and Schinzel produced an example where $\Ncal(X,\pi)$ is empty, conditional on a conjecture of Selmer.  Regarding the infinitude of $\Ncal(X,\pi)$, there are plenty of results where this set is infinite for quadratic twists families, see for instance \cite{Li} for a survey on recent results. The example of Cassels and Schinzel, as well as quadratic twist families, are isotrivial.

In the non-isotrivial case one expects:
\begin{conjecture}\label{ConjMain} If $\pi:X\to \Pro^1$ is non-isotrivial then both $\Jcal(X,\pi)$ and $\Ncal(X,\pi)$ are infinite.
\end{conjecture}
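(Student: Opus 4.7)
The plan is to split Conjecture~\ref{ConjMain} into its two independent halves: $\Jcal(X,\pi)$ is infinite (the rank jumps infinitely often) and $\Ncal(X,\pi)$ is infinite (the rank fails to jump infinitely often). These require very different techniques, and essentially only the first is approachable with current methods.

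For $\Jcal(X,\pi)$, I would follow the classical route via Hilbert irreducibility. Let $E/K$ denote the generic fibre of $\pi$, with $K=\Q(t)$. The non-isotriviality of $\pi$ means that the $j$-invariant is non-constant in $t$, and one can typically produce a quadratic twist $E^d/K$, for a suitable $d\in \Q(t)$, carrying a point $P$ of infinite order in $E^d(K)$; geometrically, $P$ corresponds to a rational multisection of $X$ that is not a $\Q(t)$-section. If $\deg d \le 2$, one arranges that $y^2=d(t)$ has infinitely many rational solutions $(b,y_b)$, and for each such $b$ the point $P$ specializes to a $\Q$-point of $X_b$ that lies outside the image of the specialization of $\MW(X,\pi)$. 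Combined with Silverman's theorem on the injectivity of the specialization map for all but finitely many $b$, this yields $\rk X_b(\Q) > \rk \MW(X,\pi)$ for infinitely many $b$.

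For $\Ncal(X,\pi)$, the natural approach is parity-theoretic. Granting the parity conjecture, $\rk X_b(\Q) \equiv w(X_b) \pmod{2}$, where $w(X_b)=\prod_v w_v(X_b)$ is the global root number and each local factor $w_v(X_b)$ is governed by the reduction type of $X_b$ at $v$, hence by the factorisation of the discriminant polynomial evaluated at $b$. I would sieve among $b\in \Q$ so that (i) $w(X_b)=(-1)^{\rk \MW(X,\pi)}$, which matches the lower bound from Silverman, and (ii) the $2$-Selmer group of $X_b$ is as small as that same lower bound permits. Both conditions have positive density in many settings, and one hopes their intersection is infinite for a well-chosen non-isotrivial $X$.

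The main obstacle, and indeed the reason no unconditional example exists in the non-isotrivial case, is step (ii): in a one-parameter family of non-isotrivial elliptic curves, we currently have no unconditional method of bounding the $2$-Selmer (or Mordell--Weil) rank from above. This is precisely why the present paper, and before it the construction of Neumann and Setzer, must invoke hypotheses such as Bunyakovsky or the existence of infinitely many Mersenne primes: these inputs produce infinitely many specializations where the discriminant has a very rigid factorisation, and on such a sparse subfamily a $2$-descent can be executed explicitly. An unconditional proof of the $\Ncal$ half of Conjecture~\ref{ConjMain} appears well beyond the reach of current technology, and would seem to require a fundamentally new input in the theory of Selmer-group averages for non-isotrivial families.
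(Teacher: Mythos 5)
You should first note that the statement you were asked to prove is Conjecture~\ref{ConjMain}: the paper does not prove it, and neither do you. What the paper offers is a heuristic (Section~\ref{SecHeuristic1}): assuming Silverman's density conjecture, $\rk X_b(\Q)\in\{R,R+1\}$ with $R=\rk\MW(X,\pi)$ outside a density-zero set; assuming Helfgott's conjectures, the average of the root numbers $w(X_b)$ exists and, in the non-isotrivial case, lies strictly between $-1$ and $1$; combining these with the parity conjecture, \emph{both} $\Jcal(X,\pi)$ and $\Ncal(X,\pi)$ have positive density. This single mechanism handles the two sets symmetrically, whereas you treat them by entirely different methods. Your parity-plus-Selmer sketch for $\Ncal(X,\pi)$ is in the same spirit as the paper's heuristic (and as its actual Theorem~\ref{ThmMain1}, which carries out exactly that strategy for the single Legendre surface via Theorem~\ref{ThmRankBound} and Monsky's parity result), but as you yourself concede, step (ii) has no general method behind it, so this half is a description of the obstruction rather than an argument.

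The $\Jcal(X,\pi)$ half of your proposal also has a genuine gap. The assertion that for a non-isotrivial $\pi$ one can ``typically produce'' a quadratic twist $E^d/\Q(t)$ with a point of infinite order, with $\deg d\le 2$ so that the conic $y^2=d(t)$ has infinitely many rational points, is exactly the unproven content. This multisection/base-change strategy is known to succeed for rational elliptic surfaces and some other classes (this is the body of work the paper points to via \cite{Salgado}), but there is no argument showing such a $d$ exists for an arbitrary non-isotrivial elliptic surface over $\Pro^1$; moreover, even granting the new point on $E^d$, you must rule out that its specializations land in the (specialized) group generated by $\MW(X,\pi)$ and torsion, which requires more than Silverman's injectivity statement as you invoke it. So neither half closes, which is consistent with the status of the statement as a conjecture; the honest framing of your write-up is appropriate, but it should be presented as a survey of strategies and obstructions, not as a proof.
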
 
A heuristic for this conjecture is implicit in Appendix A of \cite{CCH}. We revisit this heuristic in Section \ref{SecHeuristic1} for the convenience of the reader.

 Despite the theoretical support for Conjecture \ref{ConjMain}, no example of non-isotrivial elliptic surface $\pi:X\to \Pro^1$ with an infinite $\Ncal(X,\pi)$ is known. 
 
 Let us consider the case when $\rk \MW(X,\pi)=0$. Such elliptic surfaces over $\Q$ are abundant, see for instance \cite{Kloosterman}. Regarding $\Ncal(X,\pi)$, as a very special case of Conjecture \ref{ConjMain} we have:
 
\begin{conjecture}\label{ConjRk0} Let $\pi:X\to \Pro^1$ be a non-isotrivial elliptic surface defined over $\Q$ with the property that $\rk \MW(X,\pi)=0$. Then there are infinitely many $b\in \Pro^1(\Q)$ with $\rk X_b(\Q)=0$. That is, $\Ncal(X,\pi)$ is infinite.
\end{conjecture}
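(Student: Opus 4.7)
The plan is to attack the conjecture through a combination of parity-of-rank considerations and explicit descent, following the general strategy that has been successful in the study of quadratic-twist families. Given a non-isotrivial elliptic surface $\pi\colon X\to \Pro^1$ with $\rk \MW(X,\pi)=0$, I would first pass to a Weierstrass model so that the generic fibre is given by an explicit equation over $\Q(t)$; the specializations at $b\in \Pro^1(\Q)$ then agree with $X_b$ up to a finite exceptional locus, which can be discarded.

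The first step is to analyze the global root number $w(X_b)$ as a function of $b$. Following Rohrlich, $w(X_b)$ decomposes as a product of local root numbers that, at primes of good or multiplicative reduction, depend in a predictable way on the reduction type and the valuation of the discriminant at $b$. Since non-isotriviality forces the $j$-invariant of $X_b$ to vary, the conductor varies non-trivially with $b$, and I would try to show, by a sieve-theoretic counting argument in the spirit of Helfgott, that the set of $b\in \Pro^1(\Q)$ with $w(X_b)=+1$ is infinite. Under the parity conjecture (or its unconditional consequence for $2$-Selmer parity due to Dokchitser--Dokchitser), this already yields infinitely many $b$ with $\rk X_b(\Q)$ even.

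The second and more delicate step is to promote even parity to actual rank zero. The natural tool is explicit descent: bound $\dim_{\F_p}\Sel_p(X_b)$ from above for a small prime $p$, typically $p=2$ when the generic fibre has full rational $2$-torsion. For parameters $b$ whose arithmetic is sufficiently well-behaved, e.g.\ those for which the squarefree parts of the various discriminant factors are primes in prescribed residue classes with favorable splitting, one aims for $\dim_{\F_2}\Sel_2(X_b)$ to match the contribution from torsion, which forces $\rk X_b(\Q)=0$. One would then hope to exhibit infinitely many such $b$ via analytic inputs, in the spirit of Heath-Brown's results on Selmer ranks in quadratic-twist families.

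The main obstacle is this second step: outside the quadratic-twist setting, where the Selmer structure is parameterized uniformly by squarefree integers, it is typically very difficult to control $\Sel_2(X_b)$ as $b$ moves through a non-isotrivial family, because the local conditions at primes of bad reduction vary intricately with $b$, and the density results of Smith and Kane do not transfer directly. This is presumably why the abstract announces only a conditional result for the Legendre surface: the Mersenne prime hypothesis provides a sparse but explicit sequence of parameters at which the $2$-Selmer upper bound can be verified by a direct calculation, bypassing the analytic machinery that is still missing in the non-isotrivial setting.
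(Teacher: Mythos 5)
The statement you are addressing is a conjecture, and the paper does not prove it: it appears as Conjecture \ref{ConjRk0} precisely because, as the authors stress, not a single unconditional example of a non-isotrivial elliptic surface with infinite $\Ncal(X,\pi)$ is known. What the paper offers is only (a) a heuristic (Section \ref{SecHeuristic1}) resting on Silverman's density conjecture together with Helfgott's conditional computation of the average root number, and (b) Theorem \ref{ThmMain1}, a verification for the single Legendre surface conditional on the infinitude of Mersenne primes. Your text is a research program, not a proof, and both of its steps rest on unproven inputs. In the first step, Helfgott's results on root numbers in non-isotrivial families are themselves conditional on conjectures of Chowla and squarefree-value type, so even the infinitude of $b$ with $w(X_b)=+1$ is not established; and the parity conjecture (or the Dokchitser--Dokchitser $2$-Selmer parity theorem, which requires finiteness hypotheses on $\Sha$ to say anything about the rank) only yields \emph{even} rank, not rank zero. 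The second step --- controlling $\dim_{\F_2}\Sel_2(X_b)$ for infinitely many $b$ in a non-isotrivial family --- is exactly the open problem, and you concede yourself that the known machinery does not transfer outside the quadratic-twist setting. A proposal whose central step is acknowledged to be out of reach is a gap, not a proof.

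It is also worth noting that the paper's actual argument for the Legendre surface runs in a direction different from your outline. For the parameters $b=2^q$ with $p=2^q-1$ a Mersenne prime, the authors do not sieve for root number $+1$ and then descend; instead they first prove the unconditional descent bound $\rk E_q(\Q)\le 2\alpha+\mu-1=1$ via the $2$-isogeny Selmer estimate of Theorem \ref{ThmRankBound}, and then exclude the case of equality by a contradiction: equality would force the $2$-primary part of $\Sha(E_q)$ to be trivial, so Monsky's unconditional $2$-parity theorem would give $(-1)^{\rk E_q(\Q)}=w(E_q)$, i.e.\ root number $-1$, contradicting the value $w(E_q)=+1$ computed from the explicit reduction types at $2$ and $p$. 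The root number here is used as an obstruction to rank $1$, not as a source of candidates with even rank. If you want to make progress, the honest framing is that your outline reproduces the standard heuristic for why Conjecture \ref{ConjRk0} should hold, but supplies no complete argument for any of its steps.
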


This problem remains open and, to the best of our knowledge, not a single example of non-isotrivial elliptic surface with an infinite $\Ncal(X,\pi)$ is know. Nevertheless, let us consider the elliptic surface $\pi:S\to \Pro^1$ defined by the Weierstrass equation
$$
y^2 = x^3 + tx^2-16x
$$
where $t$ is an affine coordinate on $\Pro^1$. This elliptic surface is non-isotrivial and it was first studied by Neumann \cite{Neumann} and Setzer \cite{Setzer}, see also \cite{SteinWatkins}. If $p$ is a prime of the form $p=b^2+64$ for an integer $b\equiv 3\bmod 4$ then $S_b(\Q)\simeq \Z/2\Z$ and in particular $S_b$ has rank $0$ over $\Q$. A well-known conjecture of Bunyakovsky predicts that there are infinitely many primes $p$ of this form, so, one gets a conditional example where Conjecture \ref{ConjRk0} holds.

In this note we consider instead the Legendre elliptic surface $\pi:Y\to \Pro^1$ defined by the Weierstrass equation
$$
y^2 = x(x+1)(x+t).
$$
This elliptic surface has $\rk\MW(Y,\pi)=0$, see Lemma \ref{LemmaYrk0}. To state our main result, let us recall that a \emph{Mersenne prime} is a prime number $p$ of the form $p=2^q-1$ with $q$ a prime.  It is conjectured that there are infinitely many of them, see Section \ref{SecHeuristic2} for details. We prove:
\begin{theorem}\label{ThmMain1} Let $q\ge 5$ be a prime such that $p=2^q-1$ is a Mersenne prime. Then the elliptic curve $E_q$ defined by $y^2=x(x+1)(x+2^q)$ has $\rk E_q(\Q)=0$.

In particular, if there are infinitely Mersenne primes, then  the (non-isotrivial) Legendre elliptic surface $\pi:Y\to \Pro^1$  has the property that $\Ncal(Y,\pi)$ is infinite. 
\end{theorem}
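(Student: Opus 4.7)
\noindent\emph{Plan of proof.} The statement reduces to showing that $E_q : y^2 = x(x+1)(x+2^q)$ has rank zero over $\Q$ for each prime $q \geq 5$ with $p := 2^q - 1$ Mersenne, since then distinct Mersenne exponents yield distinct $b = 2^q \in \Pro^1(\Q)$ lying in $\Ncal(Y,\pi)$ (using Lemma~\ref{LemmaYrk0} to see $\rk \MW(Y,\pi) = 0$), and infinitude of the Mersenne primes forces $\Ncal(Y,\pi)$ to be infinite.

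To prove $\rk E_q(\Q) = 0$, the plan is to carry out a complete $2$-descent. The crucial structural input is that $E_q$ has full rational $2$-torsion $E_q(\Q)[2] = \{O, T_1, T_2, T_3\}$ with $T_1 = (0,0)$, $T_2 = (-1,0)$, $T_3 = (-2^q,0)$; one also checks that there is no rational $4$-torsion (the relevant products of root-differences $2^q$, $-p$, $2^q p$ are all non-squares). The classical descent map then yields an injection
$$
\alpha : E_q(\Q)/2E_q(\Q) \hookrightarrow W_S := \bigl\{(d_1,d_2,d_3) \in (\Q^*/(\Q^*)^2)^3 : d_1 d_2 d_3 \in (\Q^*)^2,\ \supp(d_i) \subseteq \{2,p\}\bigr\},
$$
given on non-$2$-torsion points by $(x,y) \mapsto (x, x+1, x+2^q) \bmod (\Q^*)^2$; the target $W_S$ has order $64$. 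Using $2^q \equiv 2 \bmod (\Q^*)^2$ (since $q$ is odd) and $2^q - 1 = p$, one computes
$$
\alpha(T_1) = (2,1,2), \quad \alpha(T_2) = (-1,-p,p), \quad \alpha(T_3) = (-2,-p,2p),
$$
which generate an order-$4$ subgroup $\alpha(E_q(\Q)[2]) \subseteq W_S$. The target is to prove that the full $2$-Selmer group $\Sel^{(2)}(E_q/\Q)$ equals $\alpha(E_q(\Q)[2])$; combined with $|E_q(\Q)/2E_q(\Q)| = 2^{\rk E_q(\Q) + 2}$ and the descent exact sequence
$$
0 \to E_q(\Q)/2E_q(\Q) \to \Sel^{(2)}(E_q/\Q) \to \Sha(E_q/\Q)[2] \to 0,
$$
this would force $\rk E_q(\Q) = 0$ (and incidentally $\Sha[2] = 0$).

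To cut $W_S$ down to $\alpha(E_q(\Q)[2])$, I impose local solvability: a triple $(d_1,d_2,d_3) \in W_S$ lies in $\Sel^{(2)}$ iff the descent system
$$
d_1 u^2 + 1 = d_2 v^2, \qquad d_1 u^2 + 2^q = d_3 w^2
$$
has a $\Q_\ell$-point for each $\ell \in \{\infty, 2, p\}$. The two connected components of the real locus of $E_q$ restrict the sign pattern of $(d_1,d_2,d_3)$ to $(+,+,+)$ or $(-,-,+)$. At $\ell = p$ the reduction is non-split multiplicative of Kodaira type $I_2$; using $2^q \equiv 1 \pmod{p}$ together with $p \equiv 7 \pmod{8}$ (valid for every Mersenne prime with $q \geq 3$), which gives $-1 \notin (\F_p^*)^2$ and $2 \in (\F_p^*)^2$, the local image of $E_q(\Q_p)$ in $(\Q_p^*/(\Q_p^*)^2)^3$ is computable by a short Hilbert-symbol calculation. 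At $\ell = 2$ the large valuation $v_2(2^q) = q \geq 5$ forces $d_1 u^2 \equiv d_3 w^2$ modulo high powers of $2$, and a Hensel-type argument on the resulting $2$-adic quadratic forms finishes the elimination.

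The hard part will be the $2$-adic analysis, because $E_q$ has additive reduction at $2$ with $v_2(\Delta_{E_q}) = 2q+4$, so smooth-reduction Hensel lifting is unavailable and one must track the $2$-adic valuations of $u$, $v$, $w$ directly through the descent equations, performing a case analysis whose delicacy depends on the precise power of $2$ dividing $2^q$. The archimedean and $p$-adic parts reduce to short explicit checks once the uniform congruences $p \equiv 7 \pmod{8}$ and $2^q \equiv 1 \pmod{p}$ have been exploited. Assembling these three local restrictions should leave exactly the four triples in $\alpha(E_q(\Q)[2])$, yielding $\Sel^{(2)}(E_q/\Q) = \alpha(E_q(\Q)[2])$ and hence $\rk E_q(\Q) = 0$.
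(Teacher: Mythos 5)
Your plan is a genuinely different route from the paper's: you propose a complete explicit $2$-descent aiming to show $\Sel^{(2)}(E_q/\Q)=\alpha(E_q(\Q)[2])$, whereas the paper combines a soft descent bound with a parity argument. (Concretely, the paper first shows $E_q$ is semistable with split multiplicative reduction at $2$ and non-split at $p$, so $w(E_q)=+1$; then a $2$-isogeny descent bound gives $\rk E_q(\Q)\le 2\alpha+\mu-1=1$, with the extra feature that equality would force the $2$-primary part of $\Sha(E_q)$ to vanish; Monsky's theorem would then give root number $-1$, a contradiction, so the rank is $0$.) However, as written your submission is a plan rather than a proof: the entire mathematical content of the $2$-descent approach is the local analysis at $\ell=2$ and $\ell=p$ showing that exactly four of the $64$ candidate classes are everywhere locally solvable, and you explicitly defer this (``the hard part will be the $2$-adic analysis,'' ``should leave exactly the four triples''). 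Nothing in the write-up establishes that the surviving set is not larger. Note also that what you are claiming is strictly stronger than what the paper proves: you need $\dim_{\F_2}\Sel^{(2)}(E_q/\Q)=2$, i.e.\ $\Sha(E_q)[2]=0$, whereas the paper's isogeny bound only yields $\dim_{\F_2}\Sel^{(2)}\le 3$ and deliberately routes around the possibility $\dim=3$ via the parity theorem. If the local conditions happened to leave $8$ classes for some $q$, your method would prove only $\rk E_q(\Q)\le 1$ and stall exactly where the paper's parity step takes over; you give no a priori reason this cannot happen.

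There is also a concrete factual error that undermines the announced $2$-adic strategy: $E_q$ does \emph{not} have additive reduction at $2$. The model $y^2=x(x+1)(x+2^q)$ is merely non-minimal at $2$ (its discriminant has $v_2=2q+4$, but the minimal model, as in Diamond--Kramer, reduces to $y^2+xy=x^3$ mod $2$, a node with rational tangent slopes), so $E_q$ has \emph{split multiplicative} reduction at $2$. Your stated reason for the difficulty of the $2$-adic step (``additive reduction \ldots smooth-reduction Hensel lifting is unavailable'') is therefore based on a false premise; correctly identifying the reduction types at $2$ and $p$ is in fact the key computation of the paper (its Lemma 3.1), since it pins down the root number. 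The archimedean and $p$-adic observations you make ($p\equiv 7\bmod 8$, $2\in(\F_p^*)^2$, $-1\notin(\F_p^*)^2$, the two real components, and the correct images $\alpha(T_i)$) are all accurate, but to turn the proposal into a proof you must either carry out the full local Selmer computation uniformly in $q$ and verify that exactly four classes survive, or else fall back on an argument, such as the paper's parity step, that tolerates a Selmer group of dimension $3$.
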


The proof of \ref{ThmMain1} has two main ingredients. First, using a descent bound we show that $\rk E_q(\Q)\le 1$, which falls short of proving the result. However, using known cases of the parity conjecture due to Monsky \cite{Monsky} as well as some control on Shafarevich--Tate groups we deduce that $\rk E_q(\Q)=1$ is not possible. For this strategy to work we need to carefully analyze the reduction type of $E_q$ at the primes $2$ and $p$.

Finally, let us mention a somewhat unexpected motivation for Conjecture \ref{ConjRk0}. Although we have only discussed elliptic surfaces over $\Q$, Conjecture \ref{ConjRk0} can also be formulated over number fields.  In \cite{Pasten} it is shown that this conjecture over number fields implies that for every number field $K$ the analogue of Hilbert's tenth problem for the ring of integers $O_K$ is undecidable.


\section{Preliminaries}

\subsection{The Legendre elliptic surface}\label{SecLegendre} Let $t$ be the affine coordinate on $\Pro^1$. The \textit{Legendre elliptic surface} is the (relatively minimal) elliptic surface $\pi:Y\to\Pro^1$ defined over $\Q$ by the affine Weierstrass equation 
$$
y^2=x(x+1)(x+t).
$$
The following lemma is a direct computation.

\begin{lemma} The Legendre elliptic surface is non-isotrivial and rational. It has $3$ singular fibres: at $t=0$ of Kodaira type $I_2$, at $t=-1$ of Kodaira type $I_2$, and at $t=\infty$ of Kodaira type $I_2^*$.
\end{lemma}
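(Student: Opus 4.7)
All three assertions reduce to direct computations from the given Weierstrass data, so the plan is to carry them out in sequence. First, expanding $y^2 = x(x+1)(x+t) = x^3+(1+t)x^2+tx$ gives Weierstrass coefficients $a_2(t)=1+t$, $a_4(t)=t$, $a_6(t)=0$, and from these I would compute the usual invariants $c_4$, $c_6$, $\Delta$ as polynomials in $t$. The $j$-invariant $j(t)=c_4^3/\Delta$ comes out as a non-constant rational function of $t$, settling non-isotriviality immediately.

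For the Kodaira types I would look at the zeros of $\Delta(t)$ on $\Pro^1$. At each finite bad fibre, the equation specialises to a cubic with a double root whose tangent cone splits rationally, so we are in split multiplicative reduction, and the Kodaira type is $I_\nu$ where $\nu$ is the order of vanishing of $\Delta$, which in each case equals $2$. For the fibre at $t=\infty$, I would pass to the local uniformiser $s=1/t$, apply a weighted rescaling of $x$ and $y$ to reach a minimal integral Weierstrass model at $s=0$, and then run Tate's algorithm on that model to confirm the type $I_2^*$.

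Finally, rationality is best handled via $\chi(\Ocal_Y)$. Once the analysis above is in place, the total contribution of all bad fibres to $\deg\Delta$ on $\Pro^1$ adds up to $12$; for a relatively minimal elliptic surface with section this equals $12\chi(\Ocal_Y)$, giving $\chi(\Ocal_Y)=1$, and by the standard classification this forces $Y$ to be rational. As an internal consistency check, the Euler numbers $e(I_2)+e(I_2)+e(I_2^*) = 2+2+8=12$ sum to the Euler number of a rational elliptic surface. I expect the main source of awkwardness to be the fibre at $t=\infty$: the weighted rescaling needed to reach a minimal integral model at $s=0$ must be carried out carefully before Tate's algorithm can be applied, whereas the computation at the finite fibres and the $j$-invariant calculation are essentially mechanical.
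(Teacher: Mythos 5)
Your plan is precisely the direct computation the paper has in mind (the paper offers no details, simply declaring the lemma ``a direct computation''), and each step checks out: $j(t)=256(t^2-t+1)^3/\bigl(t^2(t-1)^2\bigr)$ is non-constant, Tate's algorithm at the finite bad fibres and at $s=1/t=0$ (where the rescaled model $Y^2=X(X+s)(X+s^2)$ has $v(c_4)=2$, $v(\Delta)=8$, hence type $I_2^*$) gives the stated Kodaira types, and $\deg\Delta=2+2+8=12=12\chi(\Ocal_Y)$ yields rationality. One caveat: your computation will produce $\Delta=16\,t^2(t-1)^2$, so the second finite singular fibre is at $t=1$ rather than $t=-1$ as written in the statement (the fibre at $t=-1$ is the smooth curve $y^2=x^3-x$, so this is a typo in the paper), and your argument therefore proves the corrected statement.
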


Using this we get

\begin{lemma}\label{LemmaYrk0} We have $\rk \MW(Y,\pi)=0$. In fact, the group of sections over $\C$ is formed by the $2$-torsion sections.
\end{lemma}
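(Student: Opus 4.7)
The proof naturally splits into two parts. For the rank statement I would invoke the Shioda--Tate formula: since the previous lemma asserts that $Y$ is a rational elliptic surface, its N\'eron--Severi rank is $\rho(Y)=10$, and Shioda--Tate reads
$$
\rho(Y) = 2 + \rk \MW(Y,\pi) + \sum_v (m_v - 1),
$$
where $v$ runs over the singular fibres and $m_v$ is their number of irreducible components. Since an $I_n$ fibre has $n$ components and an $I_n^*$ fibre has $n+5$ components, the fibre list from the previous lemma contributes $(2-1)+(2-1)+(7-1)=8$, forcing $\rk \MW(Y,\pi)=10-2-8=0$. The computation is purely geometric, so it holds over $\C$ as well as over $\Q$.

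For the torsion statement, the three sections $(0,0)$, $(-1,0)$ and $(-t,0)$ are precisely the non-trivial $2$-torsion sections and generate a subgroup isomorphic to $(\Z/2)^2$ inside $\MW(Y,\pi)$. Because the rank is zero, $\MW(Y,\pi)$ is finite, and it remains to show that no section of order strictly larger than $2$ can exist. The cleanest route is to appeal to the classification of Mordell--Weil lattices of rational elliptic surfaces due to Oguiso--Shioda, which for the singular-fibre configuration $(I_2^*, I_2, I_2)$ records $\MW = (\Z/2)^2$ directly.

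If one prefers to avoid that classification, an elementary alternative is to identify the Legendre family (via $\lambda=-t$) with the universal elliptic curve over the modular curve $X(2)\cong \Pro^1\setminus\{0,1,\infty\}$, whose topological monodromy is the principal congruence subgroup $\Gamma(2)\subset \SL_2(\Z)$. A section of order $n$ corresponds to a non-zero $\Gamma(2)$-fixed vector in $(\Z/n)^2$, and a short case analysis (odd $n$, where $\Gamma(2)$ surjects onto $\SL_2(\Z/n)$ which acts with no non-zero fixed vector; and $n=2^k$, where the image of $\Gamma(2)$ in $\GL_2(\Z/n)$ fixes only $2(\Z/n)^2\subset E[2]$) shows that every fixed vector of order $>2$ is already accounted for. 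The step where the most care is required is this fixed-vector computation; otherwise both claims reduce to bookkeeping with Shioda--Tate and the explicit $2$-torsion.
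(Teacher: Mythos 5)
Your proposal is correct. Your primary route is essentially the paper's: the paper base-changes to $\C$ and simply cites entry 71 of the Main Theorem of \cite{OguisoShioda} for the configuration $(I_2^*,I_2,I_2)$, which already records that the full Mordell--Weil group is $(\Z/2\Z)^2$; your Shioda--Tate computation $10=2+8+\rk\MW(Y,\pi)$ (with $I_2^*$ contributing $7$ components) just makes explicit the rank part of what that table entry encodes. Your alternative for the torsion via the monodromy of the universal curve over $X(2)$ is a genuinely different and more self-contained argument, and it does work: torsion sections over $\C$ correspond to $\Gamma(2)$-invariant torsion points of a smooth fibre; the odd part vanishes because $\Gamma(2)$ surjects onto $\SL_2(\Z/n\Z)$ for odd $n$ (which has no nonzero fixed vectors); and for $n=2^k$ the image of $\Gamma(2)$ is the full level-$2$ congruence subgroup of $\SL_2(\Z/2^k\Z)$, whose fixed subgroup --- computed from the unipotents $I+2E_{12}$ and $I+2E_{21}$ --- is $2^{k-1}(\Z/2^k\Z)^2$, i.e.\ exactly the $2$-torsion. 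One small slip: that fixed subgroup is $(n/2)(\Z/n\Z)^2$, not $2(\Z/n\Z)^2$ as you wrote; the latter is only contained in $E[2]$ when $k\le 2$. The monodromy route buys independence from the Oguiso--Shioda classification at the cost of the fixed-vector bookkeeping; the table lookup buys brevity.
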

\begin{proof} Let us base change to $\C$. Since $\pi:Y_\C\to \Pro^1$ is a rational elliptic surface with singular fibres of types $I_2$, $I_2$, and $I_2^*$, its group of sections is given in the entry 71 of the Main Theorem of \cite{OguisoShioda}.
\end{proof}

We refer the reader to \cite{Ulmer} for a detailed study of this elliptic surface.

\subsection{Bounds for the rank} The following result is a more precise version of the bound provided by Proposition 1.3 in \cite{CaroPasten}; the argument is a variation of the proof of Lemma 3.1 in \emph{loc. cit.}
\begin{theorem}\label{ThmRankBound}
Let $E$ be an elliptic curve over $\Q$ admitting a $2$-isogeny $\theta:E\to E'$ over $\Q$. Let $\alpha$ and $\mu$ be the number of places of additive and of multiplicative reduction of $E$ respectively. Then 
$$
\rk E(\Q)\le   2\alpha +\mu-1.
$$ 
If equality holds, then the $2$-primary part of the Shafarevich--Tate group $\Sha(E)$ is trivial.
\end{theorem}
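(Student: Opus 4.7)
The plan is to carry out a combined descent through the $2$-isogeny $\phi := \theta : E \to E'$ and its dual $\hat\phi : E' \to E$, exploiting $\hat\phi \circ \phi = [2]_E$. Since $\phi$ and $\hat\phi$ are defined over $\Q$ with cyclic kernels of order $2$, both $\ker(\phi)$ and $\ker(\hat\phi)$ are $\Q$-rational subgroups of order $2$.

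First, standard descent via the two $2$-isogenies produces, from the exact sequences
$$
0 \to E'(\Q)/\phi E(\Q) \to \Sel^{\phi}(E/\Q) \to \Sha(E/\Q)[\phi] \to 0
$$
and its $\hat\phi$-analogue, combined with the relation $[2]_E = \hat\phi \circ \phi$, the inequality
$$
\rk E(\Q) \le \dim_{\F_2} \Sel^{\phi}(E/\Q) + \dim_{\F_2} \Sel^{\hat\phi}(E'/\Q) - 2,
$$
with equality if and only if $\Sha(E/\Q)[\phi] = \Sha(E'/\Q)[\hat\phi] = 0$. The ``$-2$'' absorbs the contributions of the two $\Q$-rational $2$-torsion points in $\ker(\phi)(\Q)$ and $\ker(\hat\phi)(\Q)$.

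Next, I bound the sum of the two Selmer dimensions by local contributions, refining the argument of Lemma~3.1 of \cite{CaroPasten}. Via Kummer theory each Selmer group embeds into a subgroup of $\Q^*/(\Q^*)^2$ cut out by local conditions, and the local image at a place $v$ has $\F_2$-dimension at most $\dim_{\F_2}(E'(\Q_v)/\phi E(\Q_v))$ (respectively for $\hat\phi$). Using Tate's algorithm to compute these cokernels at bad places and Tate local duality to pair the $\phi$- and $\hat\phi$-local conditions, a case-by-case analysis yields: combined contribution $0$ at a good-reduction place $v \nmid 2\infty$; at most $1$ at a multiplicative place; at most $2$ at an additive place; and combined at most $1$ from the pair $v \in \{2, \infty\}$. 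Summing gives $\dim \Sel^{\phi} + \dim \Sel^{\hat\phi} \le 2\alpha + \mu + 1$, hence $\rk E(\Q) \le 2\alpha + \mu - 1$.

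For the equality clause, if $\rk E(\Q) = 2\alpha + \mu - 1$ every intermediate bound must be sharp, so in particular $\Sha(E/\Q)[\phi] = \Sha(E'/\Q)[\hat\phi] = 0$. Applying the kernel-cokernel exact sequence associated to $[2]_E = \hat\phi \circ \phi$ acting on Shafarevich--Tate groups then forces $\Sha(E/\Q)[2] = 0$. Finally, if $x \in \Sha(E/\Q)[2^\infty]$ is nonzero of order $2^k$ with $k \ge 1$, then $2^{k-1}x$ is a nonzero element of $\Sha(E/\Q)[2]$, a contradiction; hence $\Sha(E/\Q)[2^\infty] = 0$.

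The hard part will be the sharp local analysis yielding the combined contribution at most $1$ from the pair $v \in \{2, \infty\}$, which is what produces the ``$-1$'' in the stated bound rather than a weaker constant. This uses the global $\Q$-rationality of $\ker(\phi)$ and $\ker(\hat\phi)$ to force a reciprocity-type constraint on the local images of the Kummer maps at $2$ and $\infty$; the analogous step in \emph{loc. cit.} produces a slightly looser bound, and the refinement required here is the technical crux.
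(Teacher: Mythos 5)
Your proposal is essentially sound and lands on the same key quantitative input as the paper, namely the bound $\dim_{\F_2} S_{\theta}(E) + \dim_{\F_2} S_{\theta'}(E') \le 2\alpha + \mu + 1$; the difference is in how the two halves of the statement are extracted from it. The paper simply cites Proposition~1.3 of \cite{CaroPasten} for the rank inequality, and for the equality clause it works through the full $2$-Selmer group: it combines the sequence $0\to E(\Q)/2E(\Q)\to S_2(E)\to \Sha(E)[2]\to 0$ with the four-term sequence $0\to E'(\Q)[\theta']/\theta(E(\Q)[2])\to S_\theta(E)\to S_2(E)\to S_{\theta'}(E')$, splitting into the cases $E(\Q)[2]\simeq \Z/2\Z$ and $(\Z/2\Z)^2$. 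You instead deduce from sharpness that $\Sha(E)[\theta]=\Sha(E')[\theta']=0$ and then kill $\Sha(E)[2]$ via the kernel--cokernel sequence for $[2]=\theta'\circ\theta$; this is a legitimate and arguably more direct variant, provided you actually verify the identity $\rk E(\Q)=\dim_{\F_2} E'(\Q)/\theta E(\Q)+\dim_{\F_2} E(\Q)/\theta' E'(\Q)-2$ in both torsion cases (your one-line remark that the $-2$ ``absorbs the two rational $2$-torsion points'' glosses over the case $E(\Q)[2]\simeq(\Z/2\Z)^2$, where the bookkeeping is different but the same identity holds). The final step, that $\Sha(E)[2]=0$ forces $\Sha(E)[2^\infty]=0$, is correct and is also how the paper concludes.

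The one genuine gap is that the inequality $\dim_{\F_2} S_{\theta}(E) + \dim_{\F_2} S_{\theta'}(E') \le 2\alpha+\mu+1$ --- which carries all the arithmetic content, in particular the delicate combined contribution of at most $1$ from the places $2$ and $\infty$ --- is only described in outline and explicitly deferred as ``the technical crux.'' As written, your argument is therefore incomplete at its most important step. This is easily repaired: that bound is exactly Theorem~2.2 of \cite{CaroPasten} (and the rank inequality itself is Proposition~1.3 there), so you may cite it rather than re-derive it; if you insist on a self-contained proof, the local analysis at $v\in\{2,\infty\}$ must be carried out in full, and your sketch does not yet do so.
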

\begin{proof} The claimed bound is precisely Proposition 1.3 in \cite{CaroPasten}. Suppose that equality holds. We will show that the $2$-torsion part of $\Sha(E)$ is trivial, which is sufficient.

Let $S_2(E)$ be the $2$-Selmer group of $E$. Then we have the exact sequence
$$
0\to E(\Q)/2E(\Q)\to S_2(E)\to \Sha(E)[2]\to 0
$$
and we see that it suffices to show
\begin{equation}\label{EqnEnough}
2\alpha+\mu-1 + \dim_{\F_2}E(\Q)[2]\ge \dim_{\F_2} S_2(E)
\end{equation}
because $\dim_{\F_2}E(\Q)/2E(\Q) = \rk E(\Q) + \dim_{\F_2}E(\Q)[2]=2\alpha+\mu-1 + \dim_{\F_2}E(\Q)[2]$.

Let $\theta:E\to E'$ be a rational $2$-isogeny and let $\theta':E'\to E$ be its dual. For the corresponding Selmer groups $S_\theta(E)$ and $S_{\theta'}(E)$ we have (cf. Theorem 2.2 in \cite{CaroPasten})
\begin{equation}\label{Eqns1s2}
\dim_{\F_2} S_{\theta}(E) + \dim_{\F_2} S_{\theta'}(E') \le 2\alpha + \mu+1.
\end{equation}
These Selmer groups are related to $S_2(E)$ via the exact sequence:
\begin{equation}\label{EqnLong}
0\to E'(\Q)[\theta']/\theta(E(\Q)[2])\to S_{\theta}(E)\to S_2(E)\to S_{\theta'}(E'),
\end{equation}
cf. Lemma 6.1 in \cite{SchSto} (note that although this is only claimed for odd primes $p$ in \emph{loc. cit.}, the hypothesis $p>2$ is not really needed at this point.)

Let us consider two cases. First, let us assume that $E(\Q)[2]\simeq \Z/2\Z$. Then \eqref{Eqns1s2} and \eqref{EqnLong} give
$$ 
\dim_{\F_2} S_2(E)\le 2\alpha + \mu+1 - \dim_{\F_2} E'(\Q)[\theta']/\theta(E(\Q)[2]) =2\alpha + \mu
$$
where $\theta(E(\Q)[2])=(0)$ because $E(\Q)[2]\simeq \Z/2\Z$. This proves \eqref{EqnEnough} when $E(\Q)[2]\simeq \Z/2\Z$. Now let us assume $E(\Q)[2]\simeq (\Z/2\Z)^2$. Then \eqref{Eqns1s2} and \eqref{EqnLong} directly give
$$ 
\dim_{\F_2} S_2(E)\le 2\alpha + \mu+1
$$
which proves \eqref{EqnEnough} in this case.
\end{proof}

\subsection{Root numbers} The root number $w(E)$ of an elliptic curve $E$ over $\Q$ is the sign of the functional equation of $E$. Furthermore, one can define the local root number $w_p(E)$ at a prime $p$, and these local root numbers are related to $w(E)$ via the formula
$$w(E)=-\prod_{p}w_p(E).$$
When $E$ is semi-stable we have a complete characterization for the local root numbers:
$$
w_p(E)=\begin{cases}
1 & \mbox{ if $E$ has good or non-split multiplicative reduction at $p$,}\\
-1 & \mbox{ if $E$ has split multiplicative reduction at $p$.}
\end{cases}
$$

\subsection{The parity conjecture} An immediate consequence of Birch and Swinnerton-Dyer conjecture is:
\begin{conjecture}[Parity Conjecture]\label{Parity conjecture}
Let $E$ be an elliptic curve over $\Q$, then $(-1)^{\rk E(\Q)}=w(E)$.
\end{conjecture}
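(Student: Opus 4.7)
The plan is to derive the parity conjecture as an immediate consequence of the Birch--Swinnerton-Dyer conjecture, since the sentence preceding the statement frames it exactly that way. The three ingredients will be: analytic continuation of $L(E,s)$ together with a functional equation (provided by modularity), a short parity argument for the order of vanishing at the central point, and the rank equality predicted by BSD.

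First, I would invoke modularity of elliptic curves over $\Q$ (Wiles, Taylor--Wiles, Breuil--Conrad--Diamond--Taylor) to obtain the analytic continuation of $L(E,s)$ to an entire function and the functional equation $\Lambda(E,s)=w(E)\Lambda(E,2-s)$ for the completed $L$-function $\Lambda(E,s)=N^{s/2}(2\pi)^{-s}\Gamma(s)L(E,s)$, where $N$ is the conductor of $E$ and $w(E)\in\{\pm 1\}$ is the global root number appearing in the statement. Set $r=\ord_{s=1}L(E,s)$. Since the archimedean factor $N^{s/2}(2\pi)^{-s}\Gamma(s)$ is holomorphic and nonzero at $s=1$, we also have $r=\ord_{s=1}\Lambda(E,s)$. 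Expanding $\Lambda(E,s)$ in a Taylor series in the shifted variable $u=s-1$ around $u=0$, the functional equation becomes $\Lambda(E,1+u)=w(E)\Lambda(E,1-u)$, which forces all odd-degree Taylor coefficients to vanish when $w(E)=+1$ and all even-degree Taylor coefficients to vanish when $w(E)=-1$. Consequently $(-1)^r=w(E)$.

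Finally, I would apply the Birch--Swinnerton-Dyer conjecture, which asserts $\rk E(\Q)=r$. Combining this with the previous step yields $(-1)^{\rk E(\Q)}=(-1)^r=w(E)$, which is exactly the parity conjecture as stated. The main obstacle is of course BSD itself, which is assumed throughout; the remaining argument is an elementary parity observation about functions satisfying $f(u)=\pm f(-u)$. It is worth remarking that the parity conjecture is generally regarded as considerably more accessible than full BSD, and substantial unconditional progress is available, including Monsky's $2$-parity theorem invoked later in the paper; these results are proved by arithmetic (rather than analytic) methods and bypass both analytic continuation and the full BSD equality.
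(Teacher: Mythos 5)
This statement is a conjecture, and the paper offers no proof of it beyond the remark that it is an immediate consequence of the Birch--Swinnerton-Dyer conjecture; your write-up simply makes that implication explicit via modularity, the functional equation $\Lambda(E,1+u)=w(E)\Lambda(E,1-u)$, and the BSD rank equality, which is exactly the intended derivation and is correct as a conditional argument. Your closing remark rightly notes that the paper's actual use of parity goes through Monsky's unconditional $2$-parity theorem (Lemma \ref{LemmaMonsky}) rather than through BSD.
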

The following is a direct consequence of Theorem 1.5 in \cite{Monsky} 
\begin{lemma}\label{LemmaMonsky}
Let $E$ be an elliptic curve over $\Q$. If the $2$-primary part of $\Sha(E)$ is finite, then $(-1)^{\rk E(\Q)}=w(E)$.
\end{lemma}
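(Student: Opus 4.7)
The plan is to derive Lemma \ref{LemmaMonsky} as a formal consequence of Monsky's 2-parity theorem. Monsky's Theorem 1.5 in \cite{Monsky} establishes unconditionally the $2$-parity statement for elliptic curves over $\Q$: namely, $(-1)^{r_2(E)}=w(E)$, where $r_2(E)$ denotes the $\Z_2$-corank of the $2^\infty$-Selmer group $\Sel_{2^\infty}(E/\Q)$. Our task is therefore just to replace $r_2(E)$ by the Mordell--Weil rank $\rk E(\Q)$ under the stated hypothesis on $\Sha(E)$.

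The key input is the standard descent exact sequence
$$
0\longrightarrow E(\Q)\otimes_{\Z}\Q_2/\Z_2 \longrightarrow \Sel_{2^\infty}(E/\Q)\longrightarrow \Sha(E)[2^\infty]\longrightarrow 0.
$$
Taking $\Z_2$-coranks, and using that $E(\Q)$ is finitely generated so that $E(\Q)\otimes\Q_2/\Z_2$ has $\Z_2$-corank equal to $\rk E(\Q)$, we obtain
$$
r_2(E)\;=\;\rk E(\Q)\;+\;\mathrm{corank}_{\Z_2}\,\Sha(E)[2^\infty].
$$

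Now I would invoke the hypothesis: if the $2$-primary part $\Sha(E)[2^\infty]$ is finite, then its $\Z_2$-corank is $0$, so $r_2(E)=\rk E(\Q)$. Combining with Monsky's identity $(-1)^{r_2(E)}=w(E)$ yields $(-1)^{\rk E(\Q)}=w(E)$, which is the parity conjecture for $E$. There is no real obstacle here beyond correctly quoting Monsky; the only substantive point to be careful about is that Monsky's theorem is indeed phrased in terms of the $2^\infty$-Selmer corank (rather than the Mordell--Weil rank), which is precisely why the finiteness hypothesis on $\Sha(E)[2^\infty]$ is exactly what is needed to pass from his statement to the one we want.
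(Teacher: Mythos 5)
Your proposal is correct and is precisely the derivation the paper has in mind: the paper offers no written proof, simply calling the lemma a direct consequence of Monsky's Theorem 1.5, and your argument (Monsky's $2$-parity for the $\Z_2$-corank of $\Sel_{2^\infty}(E/\Q)$, plus the descent exact sequence identifying that corank with $\rk E(\Q)$ when $\Sha(E)[2^\infty]$ is finite) is exactly the standard way to fill in that step.
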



\section{Proof of the main result}

\subsection{Reduction types} 

\begin{lemma} \label{Lemma reduction} Let $p=2^q-1$ be a Mersenne prime with $q\ge 5$, and let $E_q$ be the elliptic curve defined by $y^2=x(x+1)(x+2^q)$. Then $E_q$ has split multiplicative reduction at $2$, non-split multiplicative reduction at $p$, and good reduction at all other primes.
\end{lemma}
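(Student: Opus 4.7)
The plan is to handle the three classes of primes separately. The discriminant of the given Weierstrass model equals $\Delta = 2^{2q+4}(2^q-1)^2 = 2^{2q+4} p^2$, so at any prime $\ell \notin \{2,p\}$ one has $v_\ell(\Delta) = 0$, giving good reduction immediately.

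At the Mersenne prime $p$, the congruence $2^q \equiv 1 \pmod p$ makes the analysis clean: one computes $c_4 = 16(1 - 2^q + 2^{2q}) \equiv 16 \pmod p$, so $p \nmid c_4$, and together with $v_p(\Delta) = 2$ this forces multiplicative reduction of Kodaira type $I_2$. To decide split versus non-split, reduce modulo $p$ to get $y^2 \equiv x(x+1)^2$ and translate the node at $(-1,0)$ to the origin, producing the tangent cone $Y^2 + X^2$; this factors over $\F_p$ precisely when $-1$ is a square in $\F_p$. Since $q \ge 2$ forces $p \equiv 3 \pmod 4$, $-1$ is a non-residue and the reduction is non-split multiplicative.

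The prime $p = 2$ is the main obstacle because the given model is not minimal there: one finds $v_2(c_4) = 4$, $v_2(c_6) = 6$, $v_2(\Delta) = 2q + 4 \ge 14$, which means all three invariants can be simultaneously reduced by a substitution with $u = 2$. The plan is to exhibit such an explicit integral change of variables, for instance $x = 4x' + 4$ and $y = 8y' + 4x' + 4$, yielding the integral Weierstrass equation
\[
y'^2 + x' y' + y' = x'^3 + (3 + 2^{q-2}) x'^2 + (3 + 9\cdot 2^{q-4}) x' + (1 + 5\cdot 2^{q-4}),
\]
whose integrality uses the hypothesis $q \ge 5$. In this minimal model $b_2' = 13 + 2^q$ is odd, so Tate's algorithm immediately gives multiplicative reduction (of type $I_{2q-8}$). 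To conclude, translate the singular point of the mod-$2$ reduction (which sits at $(1,0)$) to the origin and observe that the tangent cone of the translated equation equals $Y(Y+X)$ over $\F_2$; since this splits as two distinct $\F_2$-rational lines, the reduction at $2$ is split multiplicative. The only genuinely delicate step is producing the minimal integral model at $2$ and verifying the integrality of its coefficients for $q \ge 5$; once that is done, the remaining verifications are direct polynomial bookkeeping modulo small powers of $2$.
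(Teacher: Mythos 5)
Your proof is correct, and it is genuinely more self-contained than the one in the paper. The paper outsources the two key facts to Diamond--Kramer: their Lemma~1 gives multiplicative reduction at $2$ and $p$ and good reduction elsewhere, and the proof of their Lemma~2 supplies a minimal model at $2$; the paper then only has to read off the tangent directions at the nodes. You instead prove everything from scratch: the computation $\Delta = 2^{2q+4}p^2$ handles good reduction away from $\{2,p\}$; the observation $c_4 \equiv 16 \pmod p$ (using $2^q \equiv 1$) gives multiplicative reduction at $p$; and at $2$ you exhibit the explicit substitution $x = 4x'+4$, $y = 8y'+4x'+4$, whose output I have checked -- it is exactly
$$
y'^2 + x'y' + y' = x'^3 + (3+2^{q-2})x'^2 + (3+9\cdot 2^{q-4})x' + (1+5\cdot 2^{q-4}),
$$
integral precisely because $q \ge 4$, with $v_2(c_4') = 0$ so that the model is minimal and the reduction is multiplicative of type $I_{2q-8}$. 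The split/non-split determinations via tangent cones ($Y^2+X^2$ at $p$ with $p \equiv 3 \bmod 4$, and $Y(Y+X)$ at $2$ after translating the node $(1,0)$ to the origin) agree with the paper's. One stylistic caution: at $p=2$ the inequalities $v_2(c_4)\ge 4$, $v_2(c_6)\ge 6$, $v_2(\Delta)\ge 12$ are necessary but not by themselves sufficient for non-minimality (that is Kraus's criterion territory), so the phrase ``which means all three invariants can be simultaneously reduced'' should not be read as invoking a theorem -- but your argument does not actually rely on it, since the explicit integral change of variables settles the matter directly. What your approach buys is independence from the Diamond--Kramer reference (whose minimal equation is in any case misquoted in the paper, as $y^2+xy = x^3+2^{q-2}x+2^{q-4}x$ is visibly a typo); what it costs is the page of bookkeeping you correctly flag as the delicate step.
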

\begin{proof} Lemma 1 in \cite{DiamondKramer} implies that $E_q$ has multiplicative reduction at $2$ and $p$, and good reduction at every other prime. The reduction of $E_q$ modulo $p$ is defined by the equation
$$
y^2=x(x+1)^2.
$$
Replacing $x$ by $x-1$ we get the model $y^2=x^3-x^2$ with the singular point at $(x,y)=(0,0)$. Thus, $E_q$ has split multiplicative reduction at $p$ if and only if $\sqrt{-1}\in \F_p$. Since $$p= 2^q-1\equiv -1 \bmod 4,$$ we conclude that $E_q$ has non-split multiplicative reduction at $p$. On the other hand, the proof of Lemma 2 in \cite{DiamondKramer} yields the following minimal equation for $E_q$ over $\Q_2$: 
$$
y^2+xy=x^3+2^{q-2}x+ 2^{q-4}x.
$$
As $q\ge 5$, the reduction modulo $2$ is given by the equation $y^2+xy=x^3$, with the singular point at $(x,y)=(0,0)$. This singular point is a nodal singularity with tangent slopes $0$ and $1$, both defined over $\F_2$. As a consequence, $E_q$ has split multiplicative reduction at $2$.
\end{proof}

\subsection{Bounding the rank}

\begin{proof}[Proof of Theorem \ref{ThmMain1}] By Lemma \ref{Lemma reduction} we have that the bad primes for $E_q$ are $2$ with split multiplicative reduction, and $p$ with non-split multiplicative reduction. Therefore, $w(E)=1$ since $w_2(E)=-1$ and $w_p(E)=1$. 

Let us apply Theorem \ref{ThmRankBound}.  We obtain $\rk E_q(\Q)\leq 1$ and we claim that, in fact, $\rk E_q(\Q)=0$. For the sake of contradiction, assume that $\rk E_q(\Q)= 1$. Then Theorem \ref{ThmRankBound} implies that the $2$-primary part of  $\Sha(E_q)$ is trivial, hence, by Lemma \ref{LemmaMonsky} we would obtain $w(E)=(-1)^{\rk E_q(\Q)}=-1$, which contradicts the fact that $w(E)=1$.
\end{proof}


\section{Heuristics}

\subsection{On Conjecture \ref{ConjMain}}\label{SecHeuristic1}

Let $\pi:X\to\Pro^1$ be a non-isotrivial elliptic surface defined over $\Q$ and let $R=\rk \MW(X,\pi)$.

Following the terminology in \cite{CCH}, the \emph{density conjecture} of Silverman  \cite{SilvermanConj} asserts that 
$$
\rk X_b(\Q)\in\{R,R+1\}
$$ 
for all $b\in \Pro^1(\Q)$ outside a set of density $0$ in $\Pro^1(\Q)$. 

In \cite{Helfgott}, Helfgott proved that, under some conjectures in analytic number theory, the average value of the root numbers $w(X_b)$ as $b$ varies in $\Pro^1(\Q)$ (ordered by height) exists and it is expressed in terms of local densities. As pointed out in p. 728 of Appendix A in  \cite{CCH}, the conjectural value for this average of roots numbers lies strictly between $-1$ and $1$ in the non-isotrivial case.

Therefore, under the previous conjectures, we deduce Conjecture \ref{ConjMain} in a strong form: both sets $\Jcal(X,\pi)$ and $\Ncal(X,\pi)$ should have positive density in $\Pro^1(\Q)$.

\subsection{On Mersenne primes}\label{SecHeuristic2} Let us recall the following folklore conjecture

\begin{conjecture}\label{ConjMersenne} There are infinitely many Mersenne primes.
\end{conjecture}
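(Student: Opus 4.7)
The plan is to sketch the standard \emph{heuristic} argument for Conjecture~\ref{ConjMersenne}, since no unconditional proof is known. The heuristic I have in mind is the classical Lenstra--Pomerance--Wagstaff random model for the Mersenne primes, which is what Section~\ref{SecHeuristic2} implicitly invokes when it calls the conjecture ``folklore''; I will describe the approach, indicate the key steps, and then explain why upgrading the heuristic to a theorem is the real obstacle.

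The approach is to estimate the ``probability'' that $M_q = 2^q-1$ is prime for a prime exponent $q$ via a refined random model for the integers. First, I would observe that if $M_q$ behaved like a uniformly random integer of its size, the prime number theorem would predict primality with probability roughly $1/(q\log 2)$. Next, I would refine this by invoking the known arithmetic of $M_q$: every prime divisor $\ell$ of $M_q$ satisfies $\ell\equiv 1\pmod{2q}$ (since $q$ is odd and $\ell$ is odd), and $\ell\equiv \pm 1\pmod 8$ (since $2$ has odd order modulo $\ell$ and is therefore a square). Folding these congruence restrictions into the local factors of the appropriate Euler product, and comparing to an unrestricted random integer of the same size, yields the Lenstra--Pomerance--Wagstaff prediction
$$
\#\{q\le x : q\text{ prime},\ M_q\text{ prime}\}\ \sim\ \frac{e^{\gamma}}{\log 2}\,\log x,
$$
whose right-hand side tends to infinity and agrees very well with the empirical data. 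The last step of the plan would be to upgrade this heuristic to a theorem by genuinely controlling the multiplicative structure of $2^q-1$ as $q$ varies over primes.

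That last step is the hard part, and the obstacle is fundamental rather than technical. The sequence $\{2^q-1 : q\text{ prime}\}$ has density zero even inside the primes, and no current tool --- sieve theory, the circle method, or zero-density estimates for $L$-functions --- is known to detect primes in such a thin multiplicative sequence. Indeed, the question whether natural sparse sequences contain infinitely many primes is open in many elementary cases (Mersenne, repunit, $n^2+1$), which reflects a broader inability of present-day analytic number theory to produce primes in sparse families. A rigorous proof will require a fundamentally new idea, and I would not attempt to guess what form it might take.
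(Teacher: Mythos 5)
Your write-up matches the paper exactly in substance: the statement is a conjecture, the paper offers no proof, and Section~\ref{SecHeuristic2} supports it only by recalling the Lenstra--Pomerance--Wagstaff heuristic, which is precisely the argument you sketch (your count of exponents $q\le x$ growing like $\frac{e^{\gamma}}{\log 2}\log x$ is the same prediction as the paper's count of Mersenne primes $p\le x$ growing like $\frac{e^{\gamma}}{\log 2}\log\log x$). Your honest assessment that upgrading the heuristic to a theorem is the genuine obstacle is also consistent with how the paper treats this as an open folklore conjecture.
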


Mersenne primes provide a way to construct large prime numbers and considerable efforts are made in order to search for them, such as the \emph{Great Internet Mersenne Prime Search} collaborative project \cite{Mersenne}.

Here we recall that the Lenstra--Pomerance--Wagstaff heuristic \cite{Wagstaff} suggests that the number of Mersenne primes $p=2^q-1$ in the interval $[1,x]$ is asymptotic to
$$
\frac{e^{\gamma}}{\log 2}\log \log x
$$
where $\gamma$ is Euler's constant. This gives a more precise form of Conjecture \ref{ConjMersenne}.


\section{Acknowledgments}
We thank Cec\'ilia Salgado for answering some questions on the relevant literature.

The first author was supported by ANID Doctorado Nacional 21190304. The second author was supported by ANID (ex CONICYT) FONDECYT Regular grant 1190442 from Chile. 


\end{document}